\RequirePackage{fix-cm}
\documentclass[smallextended]{svjour3}
\smartqed

\usepackage{amsmath,amssymb,amsfonts}
\usepackage{bm}
\usepackage{graphicx}
\usepackage{color}
\usepackage{enumerate}
\usepackage{algorithm}
\usepackage{algpseudocode}
\usepackage{float}
\usepackage[colorlinks,linkcolor=blue,citecolor=blue,urlcolor=blue]{hyperref}

\newtheorem{prop}[theorem]{Proposition}

\newtheorem{rem}[theorem]{Remark}
\newtheorem{eg}[theorem]{Example}
\newtheorem{lem}[theorem]{Lemma}
\newtheorem{cor}[theorem]{Corollary}

\newcommand{\RR}{\mathbb{R}}

\newcommand{\lmo}{\operatorname{LMO}}
\newcommand{\proj}{\operatorname{Proj}}
\newcommand{\diam}{\operatorname{Diam}}
\newcommand{\scal}[2]{{({{#1},{#2}})}}

\title{Robustness of the Frank-Wolfe Method under Inexact Oracles and the Cost of Linear Minimization}

\titlerunning{Frank-Wolfe with inexact/stochastic gradients; LMO vs projection}

\author{Tao Hu    
}


\institute{Tao Hu \at
              School of Mathematics and Statistics, Xi'an Jiaotong University, Xi'an, Shaanxi 710049, P.R. China \\ \email{hu\_tao@stu.xjtu.edu.cn} 
}

\date{}

\begin{document}
\maketitle

\begin{abstract}
We investigate the robustness of the Frank-Wolfe method when gradients are computed inexactly and examine the relative computational cost of the linear minimization oracle (LMO) versus projection. For smooth nonconvex functions, we establish a convergence guarantee of order $\mathcal{O}(1/\sqrt{k}+\delta)$ for Frank-Wolfe with a $\delta$--oracle. Our results strengthen previous analyses for convex objectives and show that the oracle errors do not accumulate asymptotically. We further prove that approximate projections cannot be computationally cheaper than accurate LMOs, thus extending to the case of inexact projections. These findings reinforce the robustness and efficiency of the Frank-Wolfe framework.
\keywords{Frank-Wolfe method\and Inexact oracle \and Projection vs.\ LMO}
\end{abstract}

\section{Introduction}\label{sec:intro}
The Frank-Wolfe method is a projection-free method for constrained optimization.
At each iteration, Frank-Wolfe solves a linear minimization subproblem instead of a projection, which can be advantageous on structured domains such as simplices or spectrahedra. Classical analyses guarantee an $\mathcal{O}(1/k)$ convergence rate for convex objectives \cite{Jaggi2013,FreundGrigas2013} and $\mathcal{O}(1/\sqrt{k})$ for nonconvex ones \cite{lacostejulien2016convergenceratefrankwolfenonconvex}.

Despite its popularity, the robustness of Frank-Wolfe under inexact gradient information remains incompletely understood. In deterministic optimization, the $\delta$--oracle model \cite{Devolder2014} is used to bound the gradient error. We revisit this framework and derive new nonaccumulation bounds for both convex and nonconvex settings.

Let $Q\subset\RR^d$ be a compact convex set and $f:Q\to\RR$ be the objective function. Denote $\|\cdot\|$ to be the $l^2$ norm. We consider the minimization problem:\begin{equation*}
\label{poi3}\begin{array}{rl}\min\limits_{ x} & f( x) \\
\mathrm{s.t.} &  x \in Q \ . \end{array}
\end{equation*} 

The Frank-Wolfe method, which has a linear minimization oracle as its basic module, is an effective way to address this problem. At the iteration point $ x_k\in Q$, the Frank-Wolfe method solves the linear minimization subproblem
\begin{equation*}\label{eq:lmo}
  \tilde{ x}_k\in \arg\min_{ x\in Q}\big\{f( x_k)+\nabla f( x_k)^\top( x- x_k)\big\}
\end{equation*}
and updates with $ x_{k+1}=(1-\overline\alpha_k) x_k+\overline\alpha_k \tilde{ x}_k$ where $\overline\alpha_k\in[0,1)$.

In the convergence analysis of the Frank–Wolfe method, the following auxiliary sequences are frequently used and will also appear in our proofs:
\begin{equation}\label{dos}
\beta_k = \frac{1}{\prod_{j=0}^{k-1} (1 - \overline{\alpha}_j)}, 
\qquad 
\alpha_k = \frac{\beta_k \overline{\alpha}_k}{1 - \overline{\alpha}_k},
\qquad k \ge 1.
\end{equation}
Here, $\{\overline{\alpha}_k\}_{k=0}^{+\infty}$ denotes the sequence of step sizes used in our algorithm.
We follow the conventions $\prod_{j=0}^{-1} (\cdot) := 1$ and $\sum_{i=0}^{-1} (\cdot) := 0$.

We denote the Frank-Wolfe gap at the point $ x \in Q$ as
\[G( x) = \sup_{ y \in Q} \nabla f( x)^T( x -  y).\]

Besides the convergence guarantee, robustness and the efficiency of the Linear Minimization Oracle are also important aspects of the Frank-Wolfe method.

The performance of Frank-Wolfe under inexact gradient is a very interesting problem. With unbiased gradients and bounded variance (or sub-Gaussian tails), Stochastic Frank-Wolfe variants achieve a Frank-Wolfe gap of $\mathcal{O}(\varepsilon)$ with $\mathcal{O}(1/\varepsilon^4)$ gradient evaluations, and variance reduction accelerates finite-sum problems and can achieve the same Frank-Wolfe gap with $\mathcal{O}(1/\varepsilon^3)$ gradient evaluations~\cite{ReddiSFW2016,Goldfarb2017,LuFreund2018,Locatello2019,Zhang2020,sfyraki2025lionsmuonsoptimizationstochastic}. For heavy-tailed noise, Stochastic Frank-Wolfe with clipping or robust estimation achieves high-probability guarantees~\cite{Tang2022,SfyrakiWang2025}.  

In the deterministic setting, there is situation where the gradient error is bounded by $\delta/D$ but can be arbitrarily chosen along the training trajectory. This case is often relaxed and referred to as obtaining a $\delta$-oracle: 
\begin{equation}\label{eq:delta-oracle}\left|(g_{\delta}( x) - \nabla f( x))^T( x - y)\right| \leq \delta,\, \forall \; y \in Q.\end{equation}

If we use an increasingly accurate gradient along the iterates, like if
\[\left|(g_{\delta}( x_k) - \nabla f( x_k))^T( x_k - y)\right| \leq \frac{1}{k + 1}\delta L D^2,\, \forall \; y \in Q,\]
then
\[G( x_k) \leq \frac{27LD^2}{4(k + 2)}(1 + \delta).\]
For the more common scenario, where the error does not decrease, Freund and Grigas prove an $\mathcal{O}(1/k + \delta)$ convergence of the Frank-Wolfe gap\cite{FreundGrigas2013}, and we show an $\mathcal{O}(1/\sqrt{k} + \delta)$ convergence for nonconvex functions in this paper. 

Sometimes we consider objective functions that are convex but non-smooth or the case when the gradients are computed at shifted points \cite{DevolderGlineurNesterov2013}. Those functions may not obtain a gradient, but they can be equipped with a $(\delta, L)$ oracle \cite{Devolder2014}:\[
0 \leq f( x) - (f_{\delta, L}(y) + g_{\delta, L}(y)^T( x - y)) \leq \frac{L}{2}\| x -y\|^2 + \delta,\, \forall \;  x, y \in Q.
\]
Since the first bound of the Frank-Wolfe gap under $(\delta, L)-$ oracle, which is $\mathcal{O}(1/k + k\delta)$, has been proposed\cite{FreundGrigas2013}, it has been an open problem for more than ten years whether the final guarantee of the Frank-Wolfe gap is optimal theoretically.

Besides robustness, the ease of computing the Linear Minimization Oracle is widely considered another major advantage of the Frank–Wolfe method, 
which makes it more prevalent than proximal gradient methods. 
However, this belief is currently supported mainly by intuition and set-specific comparisons 
\cite{CombettesPokutta2021,PokuttaIntro2024}. 
Additionally, we address a recent question posed by Woodstock~\cite{Woodstock2025} on whether linear minimization oracles are inherently cheaper than projection operators. We show that even coarse, approximate projections cannot outperform accurate LMOs in computational complexity.

Our main contributions are as follows:
\begin{enumerate}[(i)]
\item \textbf{Nonconvex Frank-Wolfe with a $\delta$-oracle.} We prove that for $L$-smooth nonconvex objectives, Frank-Wolfe with a $\delta$-oracle achieves
\[
\min_{0\le k\le K} G( x^k)\ \le\ \sqrt{\frac{2C\,(f( x^0)-f^*)}{K+1}}\ +\ 2\delta.
\]
\item \textbf{Projection vs.\ LMO.} We prove that a $K$-approximate projection at a scaled point $-\lambda x$ produces an $\varepsilon$-accurate LMO at $x$ with $\varepsilon=\mathcal{O}((K+D^2)/\lambda)$, establishing that approximate projections can not be uniformly easier than LMOs.
\end{enumerate}

\section{Frank-Wolfe with an Inexact Oracle}\label{sec:delta}

Let $Q\subset\RR^d$ be compact and convex, and $f:Q\to\RR$ be convex with $L$-Lipschitz gradient. The Frank-Wolfe update in Algorithm~\ref{alg:fw-delta} uses the $\delta$-oracle $g_\delta$ defined in \eqref{eq:delta-oracle}.
\begin{algorithm}[h]
\caption{Frank-Wolfe with a $\delta$-oracle}
\label{alg:fw-delta}
\begin{algorithmic}[1]
\State Initialize $x_0\in Q$.
\For{$k=0,1,2,\ldots$}
\State Query $g_\delta(x_k)$.
\State Solve $\tilde x_k=\arg\min_{x\in Q}g_\delta(x_k)^\top(x-x_k)$.
\State Set $x_{k+1}=x_k+\overline\alpha_k(\tilde x_k-x_k)$, $\overline\alpha_k\in[0,1)$.
\EndFor
\end{algorithmic}
\end{algorithm}

\begin{lem}\label{lem:wolfe-transfer}
Under \eqref{eq:delta-oracle}, for any $x_k\in Q$,
\[
f^\star \;\ge\; f(x_k) \;+\; \min_{x\in Q} g_\delta(x_k)^\top(x-x_k) \;-\; \delta .
\]
\end{lem}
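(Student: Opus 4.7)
The plan is to reduce the inequality to standard convexity plus a one-line application of the $\delta$-oracle bound. Let $x^\star \in \arg\min_{x\in Q} f(x)$, so $f^\star = f(x^\star)$. First I would invoke convexity of $f$ at $x_k$ tested against $x^\star$, giving
\[
f^\star \;\ge\; f(x_k) + \nabla f(x_k)^\top(x^\star - x_k).
\]

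Next, I would apply the $\delta$-oracle inequality \eqref{eq:delta-oracle} at the point $x_k$ with $y = x^\star \in Q$. The key is to pick the correct sign from the absolute value: expanding
\[
(g_\delta(x_k) - \nabla f(x_k))^\top(x_k - x^\star) \;\ge\; -\delta
\]
rearranges to $\nabla f(x_k)^\top(x^\star - x_k) \ge g_\delta(x_k)^\top(x^\star - x_k) - \delta$. Substituting into the convexity bound yields
\[
f^\star \;\ge\; f(x_k) + g_\delta(x_k)^\top(x^\star - x_k) - \delta.
\]

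Finally, since $x^\star \in Q$, one has $g_\delta(x_k)^\top(x^\star - x_k) \ge \min_{x\in Q} g_\delta(x_k)^\top(x - x_k)$, which gives the desired inequality.

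The proof is essentially mechanical; the only thing to watch is orienting the $\delta$-oracle inequality so that the sign works out in favor of the lower bound on $f^\star$. Conceptually, the statement says that the inexact linear-minimization value at $x_k$ still lower-bounds the true optimum up to the oracle slack $\delta$, which is exactly the property needed to transfer an inexact Frank--Wolfe gap into a meaningful suboptimality certificate in the subsequent analysis.
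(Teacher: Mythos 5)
Your proof is correct and follows essentially the same route as the paper's: convexity of $f$, the correctly-oriented $\delta$-oracle inequality, and a final minimization over $Q$ (the paper takes $\min_{x\in Q}$ of the pointwise bound, while you evaluate at $x^\star$ and then pass to the minimum, which is equivalent). No gaps.
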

\begin{proof}
By convexity, $f(x)\ge f(x_k)+\nabla f(x_k)^\top(x-x_k)$ for any $x\in Q$. From \eqref{eq:delta-oracle},
$\nabla f(x_k)^\top(x-x_k)\ge g_\delta(x_k)^\top(x-x_k)-\delta$. Therefore,
\[f(x) \geq f(x_k) + g_\delta(x_k)^T (x - x_k) - \delta.\]
Taking $\min_{x\in Q}$ on both sides yields the claim.
\end{proof}

We also recall a subproblem-level accuracy transfer.
\begin{prop}[{\cite[Prop.\ 5.1]{FreundGrigas2013}}]\label{prop:FG-subproblem}
Fix $\overline x\in Q$ and $\delta\ge 0$. If $\tilde x\in\arg\min_{x\in Q} g_\delta(\overline x)^\top x$, then
\[
\nabla f(\overline x)^\top\tilde x \;\le\; \min_{ x\in Q} \nabla f(\overline x)^\top x \;+\; 2\delta .
\]
\end{prop}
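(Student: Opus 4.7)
My plan is to bound the ``true'' linear objective $\nabla f(\overline x)^\top \tilde x$ in terms of the surrogate linear objective $g_\delta(\overline x)^\top \tilde x$ (which $\tilde x$ minimizes by construction), paying a cost of $\delta$ each time I swap between the two inner products, and then to exploit optimality of $\tilde x$ on the surrogate.

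Concretely, let $x^\star\in\arg\min_{x\in Q}\nabla f(\overline x)^\top x$. I would like to compare $\nabla f(\overline x)^\top\tilde x$ with $\nabla f(\overline x)^\top x^\star$, so I rewrite
\[
\nabla f(\overline x)^\top(\tilde x-x^\star)=\nabla f(\overline x)^\top(\tilde x-\overline x)+\nabla f(\overline x)^\top(\overline x-x^\star).
\]
To each term I apply the $\delta$-oracle bound \eqref{eq:delta-oracle} at $\overline x$, once with $y=\tilde x$ and once with $y=x^\star$. Each application converts a $\nabla f(\overline x)$-inner product involving $\overline x-y$ into the corresponding $g_\delta(\overline x)$-inner product up to an additive error of $\delta$. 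Summing the two yields
\[
\nabla f(\overline x)^\top(\tilde x-x^\star)\ \le\ g_\delta(\overline x)^\top(\tilde x-x^\star)+2\delta,
\]
after the $\overline x$-terms cancel.

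The final step is to invoke the defining property of $\tilde x$: since $\tilde x$ minimizes $g_\delta(\overline x)^\top x$ over $Q$ and $x^\star\in Q$, we have $g_\delta(\overline x)^\top(\tilde x-x^\star)\le 0$. Combining this with the previous display gives $\nabla f(\overline x)^\top\tilde x\le \nabla f(\overline x)^\top x^\star+2\delta=\min_{x\in Q}\nabla f(\overline x)^\top x+2\delta$, which is the claim.

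There is no real obstacle; the only thing to watch is the sign bookkeeping when pulling the absolute value off \eqref{eq:delta-oracle}, ensuring both inequalities point the ``upper bound'' way so they add up in the right direction. The $2\delta$ constant is tight in this scheme because the oracle is used twice, once at each comparison point.
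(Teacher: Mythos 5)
Your argument is correct: the two applications of the oracle bound \eqref{eq:delta-oracle} at $y=\tilde x$ and $y=x^\star$ have the right signs, the $\overline x$-terms cancel, and optimality of $\tilde x$ for the surrogate finishes the job. The paper states this proposition without proof (citing Freund--Grigas), and your derivation is essentially the standard argument given in that reference, so there is nothing to flag.
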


The convergence theorem of Frank-Wolfe with a $\delta$- oracle on convex objectives is given by Freund and Grigas as follows (one can actually show that the result of Freund and Grigas actually applies to the widest range of step-sizes): 

\begin{theorem}[Nonaccumulation under $\delta$-oracle, convex case\cite{FreundGrigas2013}]\label{thm:delta}
Let $Q$ be compact convex with diameter $D$, and $f$ be convex with $L$-Lipschitz gradient on $Q$. Let $g_\delta$ satisfy \eqref{eq:delta-oracle}. For the Frank-Wolfe iterates of Algorithm~\ref{alg:fw-delta} with stepsize satisfying $\sum_{k=0}^{+\infty}\overline\alpha_k=\infty$, $\sum_{k=0}^{+\infty}\overline\alpha_k^2<\infty$ and $\overline\alpha_k\downarrow 0$, then 
\begin{equation}\label{eq:one-step}
f( x_{k+1}) - f^*\le (1-\overline\alpha_k)\big(f( x_k) - f^*\big)+2\overline\alpha_k\delta+\tfrac{1}{2}LD^2\overline\alpha_k^2,
\end{equation}
and hence $\displaystyle \limsup_{k\to\infty}(f( x_k) - f^\star)\le 2\delta$.
\end{theorem}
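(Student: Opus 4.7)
The plan is to separate the argument into two independent pieces: a per-iteration descent inequality that produces \eqref{eq:one-step}, and an asymptotic stability argument for the resulting scalar recursion.

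For \eqref{eq:one-step} I would start from $L$-smoothness applied to the update $x_{k+1} - x_k = \overline\alpha_k(\tilde x_k - x_k)$. Together with $\|\tilde x_k - x_k\|\le D$, this gives
\[
f(x_{k+1}) \le f(x_k) + \overline\alpha_k\, \nabla f(x_k)^\top(\tilde x_k - x_k) + \tfrac{1}{2}LD^2\overline\alpha_k^2 .
\]
The first-order term is then controlled by Proposition~\ref{prop:FG-subproblem}, which transfers $g_\delta$-optimality of $\tilde x_k$ to $\nabla f$-optimality at a cost of $2\delta$, followed by convexity evaluated at the minimizer $x^\star$: together these yield $\nabla f(x_k)^\top(\tilde x_k - x_k) \le f^\star - f(x_k) + 2\delta$. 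Substituting this estimate into the smoothness bound and subtracting $f^\star$ from both sides produces \eqref{eq:one-step} on the nose.

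For the asymptotic claim, the natural move is a change of variables that absorbs the oracle error into a constant offset. Setting $a_k := f(x_k) - f^\star$ and $b_k := a_k - 2\delta$, the one-line identity $-\overline\alpha_k(2\delta) + 2\overline\alpha_k\delta = 0$ turns \eqref{eq:one-step} into the clean recursion
\[
b_{k+1} \;\le\; (1 - \overline\alpha_k)\, b_k \;+\; \tfrac{1}{2}LD^2 \overline\alpha_k^2 ,
\]
so the oracle error no longer appears. I would then invoke a standard decreasing-step-size lemma: given $\varepsilon>0$, since $\overline\alpha_k\downarrow 0$ there is $K_0$ with $\tfrac{1}{2}LD^2\overline\alpha_k \le \varepsilon$ for all $k\ge K_0$, so that $b_{k+1} - \varepsilon \le (1-\overline\alpha_k)(b_k-\varepsilon)$. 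Iterating on $k\ge K_0$ and using $\sum\overline\alpha_k=\infty$ to force $\prod_{j\ge K_0}(1-\overline\alpha_j)=0$ gives $\limsup_k b_k \le \varepsilon$, and letting $\varepsilon\to 0^+$ yields $\limsup_k\bigl(f(x_k)-f^\star\bigr)\le 2\delta$.

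The only real subtlety is the choice of shift in the recursion for $b_k$: it works precisely because the additive error in \eqref{eq:one-step} is proportional to $\overline\alpha_k$ and is therefore compatible with the contraction factor $(1-\overline\alpha_k)$; any constant other than $2\delta$ would leave a residual $\overline\alpha_k$-term. Note that the hypothesis $\sum\overline\alpha_k^2<\infty$ is not actually required for the qualitative $\limsup$ bound — the monotone decay $\overline\alpha_k\downarrow 0$ together with divergence of $\sum\overline\alpha_k$ suffices — but square-summability would be the right handle if one later wanted a quantitative rate from the same recursion.
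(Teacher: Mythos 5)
Your proposal is correct, and while the one-step inequality \eqref{eq:one-step} is obtained essentially as in the paper (the paper routes the first-order term through Lemma~\ref{lem:wolfe-transfer}, which bundles the oracle error and convexity into a single $g_\delta$-linearization bound, whereas you invoke Proposition~\ref{prop:FG-subproblem} and then convexity at $x^\star$ separately; both charge a total of $2\delta$ and land on the same display), your treatment of the asymptotic claim is genuinely different. The paper multiplies \eqref{eq:one-step} by the weights $\beta_{k+1}$ from \eqref{dos}, telescopes the $\delta$-terms via $\beta_{j+1}-\beta_j=\overline\alpha_j\beta_{j+1}$, and then shows $\sum_{j=0}^k\overline\alpha_j^2\beta_{j+1}/\beta_{k+1}\to 0$ by a split-at-$J$ argument that explicitly uses $\sum_k\overline\alpha_k^2<\infty$. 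You instead shift to $b_k=f(x_k)-f^\star-2\delta$, observe that the $\delta$-terms cancel exactly because they are proportional to $\overline\alpha_k$, and run an elementary $\varepsilon$-contraction: for $k\ge K_0$ with $\tfrac12 LD^2\overline\alpha_k\le\varepsilon$ one has $b_{k+1}-\varepsilon\le(1-\overline\alpha_k)(b_k-\varepsilon)$, and $\sum\overline\alpha_k=\infty$ forces $\prod_{j\ge K_0}(1-\overline\alpha_j)=0$. This is correct (including the case $b_{K_0}-\varepsilon\le 0$, where the iterated bound stays nonpositive), and your observation that square-summability is not needed for the qualitative $\limsup$ bound is accurate: the paper's weighted-sum route consumes that hypothesis, while yours needs only $\overline\alpha_k\to 0$ and divergence of $\sum\overline\alpha_k$. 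What the paper's heavier machinery buys in exchange is a nonasymptotic, fully quantitative bound on $f(x_{k+1})-f^\star$ at every finite $k$ (the displayed inequality with $\beta_{k+1}$ before taking limits), from which rates can be read off for specific stepsize schedules; your argument yields only the limit statement unless one tracks $K_0(\varepsilon)$ explicitly.
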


\begin{eg}[Tightness up to constants]\label{ex:tight}
Let $Q=[-1,1]$, $f( x)=\tfrac12 x^2$ (convex, $L=1$, $D=2$). Define a $\delta$-oracle by $g_\delta( x)=\nabla f( x)-\tfrac{\delta}{D}\,\operatorname{sign}( x)$. Frank-Wolfe with $\overline\alpha_k=2/(k+2)$ converges to a neighborhood whose size is proportional to $\delta$.
\end{eg}

\section{Nonconvex Frank-Wolfe with an Inexact Oracle}\label{sec:nonconvex-delta}

We now consider \emph{nonconvex} minimization over a compact convex set $S\subset\RR^d$:
\[
\min_{x\in S} f(x),
\]
where $f$ is differentiable and has $L$-Lipschitz gradient on $S$. Denote $D:=\diam(S)$ and set
\[
C \;\triangleq\; \max\{\,L D^2,\; G D\,\}\quad\text{with}\quad G:=\sup_{x\in S}\|\nabla f(x)\|<\infty .
\]
The Frank-Wolfe gap at $x$ is
\[
G(x)\;\triangleq\;\max_{s\in S}\,\langle \nabla f(x),\,x-s\rangle .
\]
We assume access to a $\delta$-oracle for the gradient, i.e., for every $x\in S$ there exists $g_\delta(x)$ such that
\begin{equation}\label{eq:dir-delta}
\big|\langle \nabla f(x)-g_\delta(x),\,s-x\rangle\big|\ \le\ \delta\quad\forall\,s\in S .
\end{equation}
Define the \emph{approximate Frank-Wolfe gap}
\[
\tilde G(x)\;\triangleq\;\max_{s\in S}\,\langle g_\delta(x),\,x-s\rangle .
\]
From \eqref{eq:dir-delta} it follows that
\begin{equation}\label{eq:gaps-close}
|\,G(x)-\tilde G(x)\,|\ \le\ \delta.
\end{equation}

\begin{algorithm}[H]
\caption{Nonconvex Frank-Wolfe with a $\delta$-oracle}\label{alg:nc-fw-delta}
\begin{algorithmic}[1]
\State \textbf{Input:} $x^0\in S$, curvature constant $C\ge \max\{L D^2,\,G D\}$, error level $\delta\ge0$.
\For{$k=0,1,2,\ldots$}
  \State Query $g_\delta(x^k)$ that satisfies \eqref{eq:dir-delta}.
  \State Solve $s^k=\arg\min_{s\in S}\langle g_\delta(x^k),\,s-x^k\rangle$; $\tilde g_k:=\tilde G(x^k)=\langle g_\delta(x^k),\,x^k-s^k\rangle$.
  \State Set \ $x^{k+1}= x^k+\overline{\alpha}_k(s^k-x^k)$, $\overline{\alpha}_k\;:=\;\frac{(\tilde g_k-\delta)_+}{C}$, where $(u)_+:=\max\{u,0\}$.
\EndFor
\end{algorithmic}
\end{algorithm}

\begin{lem}[One-step decrease]\label{lem:nc-one-step}
The iterates of Algorithm~\ref{alg:nc-fw-delta} satisfy
\begin{equation}\label{eq:nc-descent}
f(x^{k+1})\ \le\ f(x^k)\ -\ \frac{(\tilde g_k-\delta)_+^2}{2C}\, .
\end{equation}
\end{lem}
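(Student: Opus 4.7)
The plan is a standard smooth-descent argument, where the only nonstandard parts are handling the oracle error $\delta$ in the linear term and checking that the prescribed step size is indeed a feasible Frank--Wolfe step (i.e., $\overline{\alpha}_k\in[0,1]$).

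First I would apply the $L$-smoothness of $f$ along the segment from $x^k$ to $x^{k+1}=x^k+\overline{\alpha}_k(s^k-x^k)$ to get
\[
f(x^{k+1})\;\le\;f(x^k)+\overline{\alpha}_k\,\langle\nabla f(x^k),\,s^k-x^k\rangle+\tfrac{L}{2}\overline{\alpha}_k^{2}\,\|s^k-x^k\|^{2}.
\]
Next, to replace the true gradient by the oracle $g_\delta(x^k)$, I invoke \eqref{eq:dir-delta} at the point $s=s^k$: this gives $\langle\nabla f(x^k),\,s^k-x^k\rangle\le\langle g_\delta(x^k),\,s^k-x^k\rangle+\delta=-\tilde g_k+\delta$. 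Using in addition $\|s^k-x^k\|\le D$ and $LD^{2}\le C$, I obtain
\[
f(x^{k+1})\;\le\;f(x^k)-\overline{\alpha}_k(\tilde g_k-\delta)+\tfrac{C}{2}\overline{\alpha}_k^{2}.
\]

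Now I split according to the sign of $\tilde g_k-\delta$. If $\tilde g_k\le\delta$, then $\overline{\alpha}_k=0$ and $(\tilde g_k-\delta)_+=0$, so both sides of \eqref{eq:nc-descent} reduce to $f(x^{k+1})=f(x^k)$, and the inequality holds with equality. Otherwise $\overline{\alpha}_k=(\tilde g_k-\delta)/C>0$; plugging this in cancels the linear and quadratic terms down to
\[
f(x^{k+1})\;\le\;f(x^k)-\frac{(\tilde g_k-\delta)^{2}}{C}+\frac{(\tilde g_k-\delta)^{2}}{2C}\;=\;f(x^k)-\frac{(\tilde g_k-\delta)_+^{2}}{2C},
\]
which is exactly \eqref{eq:nc-descent}.

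The only point that needs a brief check, and the closest thing to an obstacle, is the admissibility $\overline{\alpha}_k\le 1$: this is needed to justify using the smoothness bound on the actual iterate $x^{k+1}\in S$ rather than on a point outside $S$. For this I would use $\tilde g_k\le G(x^k)+\delta\le \|\nabla f(x^k)\|\,D+\delta\le GD+\delta\le C+\delta$ by \eqref{eq:gaps-close} and the definition of $C$, so $(\tilde g_k-\delta)_+/C\le 1$. With this all ingredients are in place and the proof is essentially a two-line computation.
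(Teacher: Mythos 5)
Your proof is correct and follows essentially the same route as the paper's: $L$-smoothness, the $\delta$-oracle bound \eqref{eq:dir-delta} at $s=s^k$ to pass from $\nabla f(x^k)$ to $g_\delta(x^k)$, the bounds $\|s^k-x^k\|\le D$ and $LD^2\le C$, and then substitution of the step size with the case split on the sign of $\tilde g_k-\delta$. Your additional verification that $\overline{\alpha}_k\le 1$ (via $\tilde g_k\le G(x^k)+\delta\le GD+\delta\le C+\delta$) is a worthwhile detail that the paper leaves implicit, and it is needed to ensure $x^{k+1}\in S$.
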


\begin{proof}
$L$-smoothness gives
\[
\begin{aligned}
f(x^{k+1})&\le f(x^k)+\overline{\alpha}_k\langle \nabla f(x^k),\,s^k-x^k\rangle+\tfrac{L}{2}\overline{\alpha}_k^2\|s^k-x^k\|^2\\
&\le f(x^k) + \overline{\alpha}_k\langle g_{\delta}(x^k), s^k - x^k\rangle + \overline{\alpha}_k \delta + \frac{C}{2}\overline{\alpha}_k^2\\
&= f(x^k) - \overline{\alpha}_k \tilde{g}(x^k) + \overline{\alpha}_k \delta + \frac{C}{2} \overline{\alpha}_k^2\\
&= f(x^k) - \overline{\alpha}_k (\tilde{g}_k - \delta) + \frac{C}{2} \overline{\alpha}_k^2
\end{aligned}
\]
using \eqref{eq:gaps-close} and $\|s^k-x^k\|\le D$.

With $\overline{\alpha}_k=(\tilde g_k-\delta)_+/C$, 
\[f(x^{k + 1}) \leq f(x^k) - \frac{1}{2C}(\tilde g_k-\delta)_+^2,\]
since $\overline{\alpha}_k = 0$ if $\tilde g_k-\delta \leq 0$.
\end{proof}

\begin{theorem}[Nonconvex Frank-Wolfe with a $\delta$-oracle]\label{thm:nc-delta}
Let $f$ be $L$-smooth on a compact convex set $S$ of diameter $D$. Suppose the $\delta$-oracle condition \eqref{eq:dir-delta} holds. With curvature constant $C\ge\max\{L D^2,\,G D\}$ and step size $\overline{\alpha}_k=(\tilde g_k-\delta)_+/C$, the iterates of Algorithm~\ref{alg:nc-fw-delta} satisfy
\begin{equation}\label{eq:nc-rate}
\min_{0\le k\le K} G(x^k)\ \le\ \sqrt{\frac{2C\,(f(x^0)-f^*)}{K+1}}\ +\ 2\delta,
\end{equation}
where $f^*:=\inf_{x\in S} f(x)$. In particular, to reach a Frank-Wolfe gap at most $\varepsilon>2\delta$, it suffices to take
\[
K+1\ \ge\ \frac{2C\,(f(x^0)-f^*)}{(\varepsilon-2\delta)^2}\, .
\]
\end{theorem}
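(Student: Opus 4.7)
The plan is to combine the one-step descent inequality from Lemma~\ref{lem:nc-one-step} with a standard telescoping/min-over-iterates argument, and then convert the resulting bound on the approximate gap $\tilde G(x^k)$ into one on the true gap $G(x^k)$ using \eqref{eq:gaps-close}.

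First I would telescope \eqref{eq:nc-descent} over $k=0,1,\dots,K$. Since $f(x^{K+1})\ge f^*$, this yields
\[
\sum_{k=0}^{K}\frac{(\tilde g_k-\delta)_+^{\,2}}{2C}\ \le\ f(x^0)-f^*.
\]
Dividing by $K+1$ and bounding the average below by the minimum gives
\[
\min_{0\le k\le K}(\tilde g_k-\delta)_+^{\,2}\ \le\ \frac{2C\bigl(f(x^0)-f^*\bigr)}{K+1},
\]
so, after taking square roots and using $\tilde g_k\le(\tilde g_k-\delta)_+ +\delta$,
\[
\min_{0\le k\le K}\tilde g_k\ \le\ \sqrt{\frac{2C\bigl(f(x^0)-f^*\bigr)}{K+1}}\ +\ \delta.
\]

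Second, I would pass from $\tilde g_k=\tilde G(x^k)$ to $G(x^k)$. By \eqref{eq:gaps-close} we have $G(x^k)\le \tilde G(x^k)+\delta$ for every $k$, and taking the minimum on both sides preserves the inequality. Plugging this into the displayed bound gives
\[
\min_{0\le k\le K}G(x^k)\ \le\ \sqrt{\frac{2C\bigl(f(x^0)-f^*\bigr)}{K+1}}\ +\ 2\delta,
\]
which is exactly \eqref{eq:nc-rate}. The complexity claim then follows by isolating $K$ in the inequality $\sqrt{2C(f(x^0)-f^*)/(K+1)}\le \varepsilon-2\delta$, valid whenever $\varepsilon>2\delta$.

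There is really no hard step here once Lemma~\ref{lem:nc-one-step} is in hand; the only subtlety worth writing out carefully is the use of the positive part. One must check that replacing $\tilde g_k$ by $(\tilde g_k-\delta)_+$ is harmless, namely that if $\tilde g_k\le\delta$ for some index $k\le K$ then $\tilde g_k-\delta\le 0$ and the right-hand side of \eqref{eq:nc-rate} trivially dominates $G(x^k)\le \tilde g_k+\delta\le 2\delta$. Otherwise $(\tilde g_k-\delta)_+=\tilde g_k-\delta$ at the minimizing index and the chain above applies verbatim. This case split is the only place where the $(\cdot)_+$ notation needs to be handled with care.
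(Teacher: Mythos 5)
Your proposal is correct and follows essentially the same route as the paper's Appendix B: telescoping Lemma~\ref{lem:nc-one-step}, bounding the minimum by the average, and converting $\tilde g_k$ to $G(x^k)$ via \eqref{eq:gaps-close}; your explicit case split on $(\tilde g_k-\delta)_+$ is just a more verbose rendering of the paper's inequality $(G(x^k)-2\delta)_+\le(\tilde g_k-\delta)_+$. No gaps.
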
\medskip

\section{A stronger guarantee under a directionally relative \texorpdfstring{$\delta$}{delta}-oracle}\label{sec:rel-delta}
In this section we strengthen Section~\ref{sec:nonconvex-delta} by replacing the additive inexactness assumption with a \emph{directionally relative} one:
\begin{equation}\label{eq:rel-dir-delta}
\big|\langle \nabla f(x)-g_\delta(x),\,s-x\rangle\big|\ \le\ \delta \,\|\nabla f(x)\|\qquad\forall\,s\in S,\ \forall\,x\in S .
\end{equation}
Define the approximate Frank--Wolfe gap $\tilde G(x):=\max_{s\in S}\langle g_\delta(x),\,x-s\rangle$ and the true gap $G(x):=\max_{s\in S}\langle \nabla f(x),\,x-s\rangle$. From \eqref{eq:rel-dir-delta},
\begin{equation}\label{eq:gap-closeness-rel}
|\,G(x)-\tilde G(x)\,|\ \le\ \delta\,\|\nabla f(x)\|\qquad\forall x\in S.
\end{equation}

We analyze the same update as Algorithm~\ref{alg:nc-fw-delta} but with a stepsize that reflects the new assumption:
\begin{equation}\label{eq:stepsize-rel}
x^{k+1}=x^k+\overline\alpha_k(s^k-x^k),\qquad
\overline\alpha_k:=\frac{\big(\tilde G(x^k)-\delta\|\nabla f(x^k)\|\big)_+}{C},
\end{equation}
where $s^k\in\arg\min_{s\in S}\langle g_\delta(x^k),\,s-x^k\rangle$ and $C\ge \max\{L D^2,\ G D\}$ as in Section~\ref{sec:nonconvex-delta}.

\begin{lem}[One-step decrease under \eqref{eq:rel-dir-delta}]\label{lem:rel-one-step}
For the iterates \eqref{eq:stepsize-rel},
\[
f(x^{k+1})\ \le\ f(x^k)\ -\ \frac{\big(\tilde G(x^k)-\delta\|\nabla f(x^k)\|\big)_+^2}{2C}\, .
\]
\end{lem}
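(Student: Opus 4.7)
The plan is to essentially replay the proof of Lemma~\ref{lem:nc-one-step}, substituting the additive bound $\delta$ with the relative bound $\delta\|\nabla f(x^k)\|$ at every step. Start from $L$-smoothness along the segment $[x^k,x^{k+1}]$:
\[
f(x^{k+1})\ \le\ f(x^k)+\overline\alpha_k\langle \nabla f(x^k),\,s^k-x^k\rangle+\tfrac{L}{2}\overline\alpha_k^{\,2}\|s^k-x^k\|^{2}.
\]
Use the directional inexactness \eqref{eq:rel-dir-delta} with direction $s^k-x^k$ to replace the true gradient by the oracle: $\langle \nabla f(x^k), s^k-x^k\rangle\le \langle g_\delta(x^k), s^k-x^k\rangle+\delta\|\nabla f(x^k)\|$, and recognize $\langle g_\delta(x^k), s^k-x^k\rangle=-\tilde G(x^k)$ since $s^k$ is the exact minimizer of $g_\delta(x^k)^\top s$ on $S$. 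The diameter bound $\|s^k-x^k\|\le D$ together with $C\ge LD^2$ converts the quadratic term to $\frac{C}{2}\overline\alpha_k^{\,2}$.

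Collecting these three substitutions yields
\[
f(x^{k+1})\ \le\ f(x^k)\ -\ \overline\alpha_k\bigl(\tilde G(x^k)-\delta\|\nabla f(x^k)\|\bigr)\ +\ \tfrac{C}{2}\overline\alpha_k^{\,2}.
\]
Now plug in the chosen stepsize $\overline\alpha_k=\bigl(\tilde G(x^k)-\delta\|\nabla f(x^k)\|\bigr)_+/C$. When $\tilde G(x^k)-\delta\|\nabla f(x^k)\|>0$, the right-hand side simplifies to $f(x^k)-\tfrac{1}{2C}\bigl(\tilde G(x^k)-\delta\|\nabla f(x^k)\|\bigr)^{2}$, exactly as claimed. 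When $\tilde G(x^k)-\delta\|\nabla f(x^k)\|\le 0$, one has $\overline\alpha_k=0$, so $x^{k+1}=x^k$ and the inequality is trivial (the positive-part squared vanishes on the right).

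I do not anticipate any real obstacle: the argument is structurally identical to Lemma~\ref{lem:nc-one-step}, the only subtlety being that $\delta$ in the additive proof is replaced here by the data-dependent quantity $\delta\|\nabla f(x^k)\|$, which passes transparently through the smoothness inequality and the stepsize optimization. The choice of $\overline\alpha_k$ is precisely the unconstrained minimizer of the upper bound $-\overline\alpha_k(\tilde G(x^k)-\delta\|\nabla f(x^k)\|)+\tfrac{C}{2}\overline\alpha_k^{\,2}$ truncated at zero, which is why the bound is automatically tight and the positive-part appears naturally.
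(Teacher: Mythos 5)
Your proof is correct and follows essentially the same route as the paper's: $L$-smoothness, the relative oracle bound in the direction $s^k-x^k$, the diameter bound absorbing the quadratic term into $C$, and then the observation that the chosen stepsize is the truncated minimizer of the resulting quadratic. Your explicit treatment of the case $\tilde G(x^k)-\delta\|\nabla f(x^k)\|\le 0$ is a slightly more detailed rendering of what the paper summarizes as ``minimizing the quadratic upper bound over $\overline\alpha_k\ge 0$,'' but the argument is the same.
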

\begin{proof}
$L$-smoothness and $\|s^k-x^k\|\le D$ give
\[
f(x^{k+1})\le f(x^k)+\overline\alpha_k\langle\nabla f(x^k),s^k-x^k\rangle+\tfrac{L}{2}\overline\alpha_k^2\|s^k-x^k\|^2
\le f(x^k)+\overline\alpha_k\langle\nabla f(x^k),s^k-x^k\rangle+\tfrac{C}{2}\overline\alpha_k^2.
\]
By \eqref{eq:rel-dir-delta} and the choice of $s^k$,
\(
\langle\nabla f(x^k),s^k-x^k\rangle
\le \langle g_\delta(x^k),s^k-x^k\rangle+\delta\|\nabla f(x^k)\|
= -\tilde G(x^k)+\delta\|\nabla f(x^k)\|.
\)
Plugging this in and minimizing the quadratic upper bound over $\overline\alpha_k\ge 0$ yields the claim with the choice \eqref{eq:stepsize-rel}.
\end{proof}

\begin{theorem}[Relative oracle: residual scales with $\|\nabla f\|$]\label{thm:rel-nc}
Let $f$ be $L$-smooth on compact convex $S$ of diameter $D$ and suppose \eqref{eq:rel-dir-delta} holds. With the stepsize \eqref{eq:stepsize-rel},
\begin{equation}\label{eq:rel-main-rate}
\min_{0\le k\le K}\big(G(x^k)-2\delta\|\nabla f(x^k)\|\big)_+
\;\le\; \sqrt{\frac{2C\,(f(x^0)-f^*)}{K+1}}\ .
\end{equation}
\end{theorem}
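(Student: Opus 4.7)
The plan is to mirror the structure of the proof of Theorem~\ref{thm:nc-delta}: telescope the one-step descent estimate supplied by Lemma~\ref{lem:rel-one-step}, convert a sum bound into a minimum bound by the standard averaging argument, and then transfer the resulting control from the \emph{surrogate} quantity $\bigl(\tilde G(x^k)-\delta\|\nabla f(x^k)\|\bigr)_+$ over to the \emph{true} quantity $\bigl(G(x^k)-2\delta\|\nabla f(x^k)\|\bigr)_+$ via the closeness estimate \eqref{eq:gap-closeness-rel}.

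Concretely, the first step is to sum Lemma~\ref{lem:rel-one-step} over $k=0,\ldots,K$, which telescopes on the left-hand side to give
\[
\sum_{k=0}^{K}\frac{\bigl(\tilde G(x^k)-\delta\|\nabla f(x^k)\|\bigr)_+^2}{2C}
\ \le\ f(x^0)-f(x^{K+1})\ \le\ f(x^0)-f^\ast .
\]
Bounding the left-hand side below by $(K+1)$ times the smallest term and taking a square root yields
\[
\min_{0\le k\le K}\bigl(\tilde G(x^k)-\delta\|\nabla f(x^k)\|\bigr)_+
\ \le\ \sqrt{\tfrac{2C\,(f(x^0)-f^\ast)}{K+1}}.
\]

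The second step is to replace $\tilde G$ by $G$. From \eqref{eq:gap-closeness-rel} we have $G(x)\le \tilde G(x)+\delta\|\nabla f(x)\|$ for every $x\in S$, so that for each $k$,
\[
G(x^k)-2\delta\|\nabla f(x^k)\|\ \le\ \tilde G(x^k)-\delta\|\nabla f(x^k)\| .
\]
Applying the monotone map $t\mapsto (t)_+$ preserves the inequality, and taking minima over $k$ then combines with the previous display to give exactly \eqref{eq:rel-main-rate}.

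No step is genuinely delicate: Lemma~\ref{lem:rel-one-step} has already absorbed all of the work tailored to the relative oracle \eqref{eq:rel-dir-delta}, and what remains is a textbook telescoping argument. The only small subtlety is that the stepsize \eqref{eq:stepsize-rel} may vanish when $\tilde G(x^k)\le \delta\|\nabla f(x^k)\|$, but in that regime the corresponding summand is zero and the one-step estimate degenerates to $f(x^{k+1})\le f(x^k)$, so both the telescoping bound and the monotonicity step under $(\cdot)_+$ remain valid.
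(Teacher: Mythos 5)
Your proposal is correct and follows essentially the same route as the paper's proof: telescope Lemma~\ref{lem:rel-one-step}, pass from the sum to the minimum, and transfer from $\tilde G$ to $G$ via \eqref{eq:gap-closeness-rel} using monotonicity of $t\mapsto(t)_+$. The remark about the degenerate zero-stepsize case is a nice (if unneeded) extra observation; no changes required.
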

\begin{proof}
Summing Lemma~\ref{lem:rel-one-step} for $k=0,\dots,K$ gives
\[
\sum_{k=0}^K\big(\tilde G(x^k)-\delta\|\nabla f(x^k)\|\big)_+^2
\le 2C\big(f(x^0)-f(x^{K+1})\big)\le 2C(f(x^0)-f^*).
\]
Hence
\[
\min_{0\le k\le K}\big(\tilde G(x^k)-\delta\|\nabla f(x^k)\|\big)_+
\le \sqrt{2C(f(x^0)-f^*)/(K+1)}.
\]
Using \eqref{eq:gap-closeness-rel} yields
\[
\big(G(x^k)-2\delta\|\nabla f(x^k)\|\big)_+\le
\big(\tilde G(x^k)-\delta\|\nabla f(x^k)\|\big)_+,
\]
which implies \eqref{eq:rel-main-rate}.
\end{proof}

The bound \eqref{eq:rel-main-rate} is \emph{strictly stronger} than Theorem~\ref{thm:nc-delta} whenever the gradients encountered along the trajectory are small, since the residual term vanishes proportionally to $\|\nabla f(x^k)\|$. In particular, one obtains \emph{asymptotically exact} Frank--Wolfe stationarity whenever $\|\nabla f(x^k)\|\to 0$.

We now show that a purely \emph{gap-only} rate with no additive residual follows if the iterates remain at a positive distance from the boundary.

\begin{cor}[Residual-free rate under an interior margin]\label{cor:margin}
Suppose there exists $r>0$ such that $\mathrm{dist}(x^k,\partial S)\ge r$ for all $k$. Then for all $x\in S$, $G(x)\ge r\|\nabla f(x)\|$, and if additionally $\delta<r/2$,
\begin{equation}\label{eq:margin-rate}
\min_{0\le k\le K} G(x^k)\ \le\ \frac{1}{1-\tfrac{2\delta}{r}}\,
\sqrt{\frac{2C\,(f(x^0)-f^*)}{K+1}}\ .
\end{equation}
Consequently, $\min_{0\le k\le K}G(x^k)=\mathcal{O}(1/\sqrt{K})$ with no additive residual, and $\liminf_{k\to\infty}G(x^k)=0$.
\end{cor}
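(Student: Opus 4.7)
The plan is to reduce Corollary~\ref{cor:margin} to Theorem~\ref{thm:rel-nc} by using the interior margin to convert the additive residual $\delta\|\nabla f(x^k)\|$ in \eqref{eq:rel-main-rate} into a multiplicative factor on $G(x^k)$.

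First, I would establish the pointwise inequality $G(x)\ge r\|\nabla f(x)\|$. If $\nabla f(x)=0$ the claim is trivial, so assume otherwise. By the margin hypothesis $\mathrm{dist}(x,\partial S)\ge r$, the point $y:=x-r\,\nabla f(x)/\|\nabla f(x)\|$ lies in $S$. Plugging this choice into the definition of $G(x)$ immediately gives
\[
G(x)\ \ge\ \langle \nabla f(x),\,x-y\rangle\ =\ r\|\nabla f(x)\|.
\]

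Next, I would combine this bound with Theorem~\ref{thm:rel-nc}. Since $G(x^k)\ge 0$ always and $\|\nabla f(x^k)\|\le G(x^k)/r$, the assumption $\delta<r/2$ yields
\[
\bigl(G(x^k)-2\delta\|\nabla f(x^k)\|\bigr)_+\ \ge\ \Bigl(1-\tfrac{2\delta}{r}\Bigr)G(x^k)\ \ge\ 0,
\]
so the positive part can be dropped on the left-hand side. Taking $\min_{0\le k\le K}$ and invoking \eqref{eq:rel-main-rate} then gives
\[
\Bigl(1-\tfrac{2\delta}{r}\Bigr)\min_{0\le k\le K}G(x^k)\ \le\ \min_{0\le k\le K}\bigl(G(x^k)-2\delta\|\nabla f(x^k)\|\bigr)_+\ \le\ \sqrt{\frac{2C\,(f(x^0)-f^*)}{K+1}}.
\]
Dividing by the positive quantity $1-2\delta/r$ yields \eqref{eq:margin-rate}, and letting $K\to\infty$ on the right-hand side produces $\liminf_{k\to\infty}G(x^k)=0$.

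The only nontrivial step is the margin inequality $G(x)\ge r\|\nabla f(x)\|$, but this is essentially an exercise: the closed ball of radius $r$ around $x$ lies in $S$, so the Frank--Wolfe maximization is free to move a full distance $r$ in the direction $-\nabla f(x)/\|\nabla f(x)\|$. After this observation, everything else is a direct algebraic substitution into the bound from Theorem~\ref{thm:rel-nc}, so I do not anticipate any serious obstacles.
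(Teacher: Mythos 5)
Your proposal is correct and follows essentially the same route as the paper: establish the margin inequality $G(x)\ge r\|\nabla f(x)\|$ by moving a distance $r$ along $-\nabla f(x)/\|\nabla f(x)\|$, then substitute $\|\nabla f(x^k)\|\le G(x^k)/r$ into the bound of Theorem~\ref{thm:rel-nc} and divide by $1-2\delta/r>0$. Your treatment of the positive part and the $\nabla f(x)=0$ case is slightly more explicit than the paper's, but the argument is the same.
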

\begin{proof}
If $\mathrm{dist}(x,\partial S)\ge r$, then for the unit vector $u=\nabla f(x)/\|\nabla f(x)\|$ the point $x-ru\in S$, which yields
$G(x)\ge \langle \nabla f(x),\,x-(x-ru)\rangle=r\|\nabla f(x)\|$.
Using this in \eqref{eq:rel-main-rate} gives
\(
(1-2\delta/r)\min_{k}G(x^k)\le \sqrt{2C(f(x^0)-f^*)/(K+1)},
\)
and \eqref{eq:margin-rate} follows because $\delta<r/2$.
\end{proof}

\begin{rem}[Implementability]
The step \eqref{eq:stepsize-rel} uses $\|\nabla f(x^k)\|$ only in the analysis. A standard backtracking line-search based on the smoothness upper model 
\(
\varphi(\alpha)=f(x^k)+\alpha\langle g_\delta(x^k),s^k-x^k\rangle+\tfrac{L}{2}\alpha^2\|s^k-x^k\|^2
\)
(which uses only $g_\delta$, $L$, and $\|s^k-x^k\|$) produces a stepsize $\hat\alpha_k$ satisfying the same decrease inequality as in Lemma~\ref{lem:rel-one-step} up to a harmless constant factor in $C$, so Theorem~\ref{thm:rel-nc} and Corollary~\ref{cor:margin} continue to hold with possibly larger $C$.
\end{rem}

\section{Projection vs.\ Linear Minimization Oracle}\label{sec:lmo-vs-proj}

Let $(\cdot,\cdot)$ and $\|\cdot\|$ denote the Euclidean inner product and its norm.
For a nonempty compact convex set $C\subset\RR^d$, define projection $\proj_C(x)=\arg\min_{c\in C}\frac12\|c-x\|^2$ and linear minimization oracle $\lmo_C(z)=\arg\min_{c\in C}(c,z)$. $p'\in C$ is a \emph{$K$-approximate projection} of $x$ onto $C$ if
\[
\tfrac12\|p'-x\|^2\le \min_{c\in C}\tfrac12\|c-x\|^2+K .
\]

\begin{prop}\label{prop:epsproj-bound}
Let $p'\in C$ be a $K$-approximate projection of $x$ onto $C$, then for all $c\in C$,
\[
(c-p',\,x-p')\le K+\tfrac12\|c-p'\|^2 .
\]
\end{prop}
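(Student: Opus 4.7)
The plan is to apply the $K$-approximate projection condition against an arbitrary $c\in C$ and then expand the squared norms using the identity $\|a\|^2-\|b\|^2=(a-b,\,a+b)$. Since the inequality $\tfrac12\|p'-x\|^2\le\min_{c\in C}\tfrac12\|c-x\|^2+K$ in particular holds with the minimum replaced by $\tfrac12\|c-x\|^2$ for any individual $c\in C$, I immediately get the pointwise bound
\[
\tfrac12\|p'-x\|^2\ \le\ \tfrac12\|c-x\|^2+K\qquad\forall\,c\in C .
\]
This reduction is the only ``soft'' step, and it is what lets me choose $c$ freely in what follows.

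Next I would rewrite $\tfrac12\|c-x\|^2-\tfrac12\|p'-x\|^2$ by setting $a=c-x$, $b=p'-x$, so that $a-b=c-p'$ and $a+b=c+p'-2x=(c-p')+2(p'-x)$. A one-line expansion then yields
\[
\tfrac12\|c-x\|^2-\tfrac12\|p'-x\|^2\ =\ \tfrac12\|c-p'\|^2-(c-p',\,x-p') .
\]
Combining this with the pointwise bound above and rearranging gives exactly $(c-p',\,x-p')\le K+\tfrac12\|c-p'\|^2$, which is the claim.

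As a cross-check, I would briefly note that the same conclusion can be obtained by exploiting convexity of $C$ directly: applying the approximate projection inequality at the point $p'+t(c-p')\in C$ for $t\in(0,1]$ produces a family of quadratic-in-$t$ bounds, and setting $t=1$ recovers the stated inequality (while intermediate $t$ reveal that the $K$ term is essentially tight only as $t\downarrow 0$). This perspective is useful later because it hints at how $K$-approximate projections behave like ``noisy'' variational inequalities.

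I do not anticipate a real obstacle here: the proof is a two-step expansion. The only place to be slightly careful is in the very first move, namely that $\min_{c\in C}$ on the right of the defining inequality can be dropped to obtain a pointwise statement for arbitrary $c\in C$; once that is recognized, no further use of convexity or compactness of $C$ is needed.
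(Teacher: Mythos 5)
Your proof is correct and is essentially the paper's own argument: drop the minimum to get the pointwise inequality $\tfrac12\|p'-x\|^2\le\tfrac12\|c-x\|^2+K$ for each $c\in C$, then expand the squares via $\tfrac12\|c-x\|^2-\tfrac12\|p'-x\|^2=\tfrac12\|c-p'\|^2-(c-p',\,x-p')$ and rearrange. The convexity-based cross-check is a nice aside but not needed.
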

\begin{proof}
From the definition of $p'$,
$\tfrac12\|p'-x\|^2\le \tfrac12\|c-x\|^2+K$ for all $c\in C$.
Expanding the squares and simplifying gives $(c-p',\,x-p')\le K+\tfrac12\|c-p'\|^2$.
\end{proof}

The following theorem establishes an equivalence in computational effort between approximate projections and LMOs.

\begin{theorem}[From $K$-projection to $\varepsilon$-LMO]\label{thm:kproj-to-lmo}
Let $C\subset\RR^d$ be a nonempty compact convex set with diameter $\delta_C:=\sup_{c_1,c_2\in C}\|c_1-c_2\|$ and radius $\mu_C:=\sup_{c\in C}\|c\|$. $x\in\RR^d$. Let $v\in\lmo_C(x)$ and $p'\in C$ be a $K$-approximate projection of $-\lambda x$ onto $C$ for some $\lambda>0$. Then
\[
0\le (p',x)-(v,x)\le \frac{K+\frac12\delta_C^2+\mu_C\delta_C}{\lambda}.
\]
In particular, choosing $\lambda\ge\big(K+\frac12\delta_C^2+\mu_C\delta_C\big)/\varepsilon$ ensures
$(p',x)\le \min_{c\in C}(c,x)+\varepsilon$, i.e., $p'\in \varepsilon$-$\lmo_C(x)$.
\end{theorem}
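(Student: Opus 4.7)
The lower bound $0\le (p',x)-(v,x)$ is immediate from the definition of $v\in\lmo_C(x)$ as the minimizer of $c\mapsto(c,x)$ over $C$, since $p'\in C$. The content is the upper bound.

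The plan is to apply Proposition~\ref{prop:epsproj-bound} at the point $-\lambda x$, with the LMO minimizer $v$ playing the role of the test point $c$. Since $p'$ is a $K$-approximate projection of $-\lambda x$ onto $C$, the proposition gives
\[
(v-p',\,-\lambda x-p')\ \le\ K+\tfrac12\|v-p'\|^2.
\]
Expanding the left-hand side splits it into $-\lambda(v-p',x)-(v-p',p')$, so rearranging produces
\[
\lambda\,\bigl((p',x)-(v,x)\bigr)\ \le\ K+\tfrac12\|v-p'\|^2+(v-p',p').
\]

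Next I would estimate the two $C$-dependent terms crudely using the definitions of the diameter and radius. Since $v,p'\in C$, one has $\|v-p'\|\le\delta_C$, which takes care of the quadratic term. For the inner product $(v-p',p')$, Cauchy--Schwarz together with $\|v-p'\|\le\delta_C$ and $\|p'\|\le\mu_C$ gives $(v-p',p')\le\delta_C\mu_C$. Substituting these bounds and dividing through by $\lambda>0$ yields
\[
(p',x)-(v,x)\ \le\ \frac{K+\tfrac12\delta_C^2+\mu_C\delta_C}{\lambda},
\]
which is the desired estimate. The final clause about the choice of $\lambda$ ensuring $p'\in\varepsilon\text{-}\lmo_C(x)$ then follows by solving the inequality $(K+\tfrac12\delta_C^2+\mu_C\delta_C)/\lambda\le\varepsilon$ for $\lambda$.

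There is no real obstacle here: the only mild subtlety is recognizing that the scaling by $-\lambda$ is what converts projection-type information into LMO-type information, because projecting $-\lambda x$ forces the optimality inequality in Proposition~\ref{prop:epsproj-bound} to involve a linear term $\lambda(c-p',x)$ that dominates the quadratic slack $\tfrac12\|c-p'\|^2$ and the cross term $(c-p',p')$ once $\lambda$ is taken large. All subsequent steps are routine applications of Cauchy--Schwarz and the bounds $\delta_C,\mu_C$ on the geometry of $C$.
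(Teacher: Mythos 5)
Your proposal is correct and follows essentially the same route as the paper: apply Proposition~\ref{prop:epsproj-bound} at $-\lambda x$ with $c=v$, expand, and bound $\tfrac12\|v-p'\|^2$ by $\tfrac12\delta_C^2$ and the cross term by $\mu_C\delta_C$. The only cosmetic difference is that you bound $(v-p',p')$ by a direct Cauchy--Schwarz step, whereas the paper routes through $\|p'\|(\|v\|-\|p'\|)\le\|p'\|\,\|v-p'\|$; both give the same constant.
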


\paragraph{Discussion.}
This extends the exact-projection implication of \cite{Woodstock2025} to \emph{inexact} projections: one $K$-projection at a scaled point yields an $\varepsilon$-accurate LMO. In particular, accurate linear minimization is \emph{no slower} than coarse projection, uniformly over compact convex sets.

\section{Conclusion}
We have established tight robustness guarantees for the Frank-Wolfe method under inexact gradient oracles and extended recent insights on the relationship between projection and linear minimization. The results confirm that oracle errors do not accumulate and that approximate projections cannot be computationally superior to accurate LMOs.

\section{Acknowledgment}

I am grateful to Dr. Paul Grigas, who taught me a course on nonlinear optimization and led me to the topic of the Frank-Wolfe method.

\section{Compliance with Ethical Standards}

\textbf{Conflict of interest.} The authors declare that they have no competing interests.

\textbf{Ethical approval.} This article does not contain any studies with human participants or animals performed by any of the authors.

\textbf{Informed consent.} Not applicable.

\section{Declarations}

\textbf{Funding.} No funding was received for conducting this study.

\textbf{Competing interests.} The authors have no competing interests to declare.

\textbf{Data availability.} Not applicable.

\textbf{Code availability.} Not applicable.

\appendix
\section*{Appendix A. Proof of Theorem~\ref{thm:delta}}
Let $D=\diam(Q)$. Lipschitz smoothness of $f$ and \eqref{eq:delta-oracle} yield, for the Frank-Wolfe step $ x_{k+1}= x_k+\overline\alpha_k(\tilde x_k- x_k)$,
\begin{align*}
f( x_{k+1})
&\le f( x_k) + \nabla f( x_k)^\top( x_{k+1}- x_k) + \frac{L}{2}\| x_{k+1}- x_k\|^2 \\
&= f( x_k) + \overline\alpha_k\, \nabla f( x_k)^\top(\tilde x_k- x_k)
+\frac{L}{2}\overline\alpha_k^2 \|\tilde x_k- x_k\|^2 \\
&\le f( x_k) + \overline{\alpha}_k g_{\delta}( x_k)^T (\widetilde{ x}_k -  x_k) + \overline{\alpha}_k \delta + \frac{L}{2} \overline{\alpha}_k^2 \|\tilde{ x}_k -  x_k\|^2\\
&\le (1-\overline\alpha_k)f( x_k)
   + \overline\alpha_k\big(f( x_k)+g_\delta( x_k)^\top(\tilde x_k- x_k)-\delta\big)
   + 2\overline\alpha_k\delta + \tfrac{L}{2}D^2\overline\alpha_k^2 \\
&\le (1-\overline\alpha_k)f( x_k) + \overline\alpha_k f^\star + 2\overline\alpha_k\delta + \tfrac{L}{2}D^2\overline\alpha_k^2 ,
\end{align*}
where the third line uses $\|\tilde x_k- x_k\|\le D$ and the last line uses Lemma~\ref{lem:wolfe-transfer}.
Subtracting both sides from $f^\star$ gives \eqref{eq:one-step}. 

In order to continue, we multiply $\beta_{k+1}$ by both sides of Equation \eqref{eq:one-step}; using Equations \eqref{dos}, we get that 

\[\beta_{k+1} (f( x_{k + 1}) - f^*) \leq \beta_k (f( x_k) - f^*) + 2 \overline \alpha_k \beta_{k + 1} \delta + \frac{L}{2} D^2 \overline \alpha_k^2 \beta_{k + 1}. \]

By taking the summation, we get that 
\[
\begin{aligned}
\beta_{k + 1} (f( x_{k + 1}) - f^*) &\leq (f( x_0) - f^*) + 2\delta \sum_{j = 0}^k \overline \alpha_j \beta_{j + 1} + \frac{L}{2} D^2 \sum_{j = 0}^k\overline\alpha_j^2 \beta_{j + 1}\\
&\leq (f( x_0) - f^*) + 2\delta \sum_{j = 0}^k (\beta_{j + 1} - \beta_j) + \frac{L}{2} D^2 \sum_{j = 0}^k\overline\alpha_j^2 \beta_{j + 1},
\end{aligned}\]

since $\beta_{k+1} - \beta_k = \overline\alpha_k \beta_{k+1}$. The summation term telescopes as
\[
\sum_{j = 0}^k (\beta_{j + 1} - \beta_j) = \beta_{k + 1} - 1.
\]
Substituting this back, we obtain
\[
\beta_{k + 1}(f( x_{k + 1}) - f^*) \le (f( x_0) - f^*) + 2\delta(\beta_{k + 1} - 1)
+ \frac{L}{2} D^2 \sum_{j = 0}^k \overline \alpha_j^2 \beta_{j + 1}.
\]
Dividing both sides by $\beta_{k + 1}$ yields
\[
f( x_{k + 1}) - f^* \le \frac{f( x_0) - f^*}{\beta_{k + 1}} + 2\delta\Big(1 - \frac{1}{\beta_{k + 1}}\Big)
+ \frac{L}{2} D^2 \frac{\sum_{j = 0}^k \overline \alpha_j^2 \beta_{j + 1}}{\beta_{k + 1}}.
\]

Take any $1 < J < k$,  
\[\begin{aligned}
    \frac{\sum_{j = 0}^k \overline \alpha_j^2 \beta_{J + 1}}{\beta_{k + 1}} &= \sum_{j = 0}^J \overline \alpha_j^2 \prod_{t = J + 1}^k (1 - \overline \alpha_t) + \sum_{j = J + 1}^k \overline \alpha_j^2 \prod_{t = j + 1}^k (1 - \overline \alpha_t)\\
    &\leq \sum_{j = 0}^J \overline \alpha_j^2 \prod_{t = J + 1}^k (1 - \overline \alpha_t) + \sum_{j = J + 1}^k \overline \alpha_j^2.
\end{aligned}\]

Therefore, \[\limsup_{k \rightarrow +\infty} \frac{\sum_{j = 0}^k \overline \alpha_j^2 \beta_{j + 1}}{\beta_{k + 1}} \leq \sum_{j = J + 1}^{+\infty} \overline{\alpha}_j^2,\, \forall J > 1.\]

Hence, \[\limsup_{k \rightarrow +\infty} \frac{\sum_{j = 0}^k \overline \alpha_j^2 \beta_{j + 1}}{\beta_{k + 1}} = 0.\]

Hence
\[
\limsup_{k\to\infty} (f( x_k) - f^*) \le 2\delta.
\]
This completes the proof.

\section*{Appendix B. Proof of Theorem~\ref{thm:nc-delta}}
By Lemma~\ref{lem:nc-one-step} we have
\[
f(x^{k+1})\ \le\ f(x^k) - \frac{(\tilde g_k-\delta)_+^2}{2C}.
\]
Summing from $k=0$ to $K$ yields
\[
\sum_{k=0}^K (\tilde g_k-\delta)_+^2 \;\le\; 2C(f(x^0)-f(x^{K+1})) \;\le\; 2C(f(x^0)-f^*).
\]
Therefore \[\min_{0\le k \le K} (G(x^k) - 2\delta)_+ \le \min_{0\le k\le K}(\tilde g_k-\delta)_+ \le \sqrt{2C(f(x^0)-f^*)/(K+1)}.\]  
\qed

\section*{Appendix C. Proof of Theorem~\ref{thm:kproj-to-lmo}}

\begin{proof}
    By proposition \ref{prop:epsproj-bound}, we have that
    \[\scal{c - p'}{-\lambda x - p'} \leq K + \frac{1}{2} \|c - p'\|^2, \forall \; c \in C.\]
    Then, and take $c = v$,
    \[\lambda\scal{p'}{x} - \lambda\scal{v}{x} \leq K + \frac{1}{2} \|v - p'\|^2 + \scal{p'}{v - p'}.\]

    Next,
    \[
    \begin{aligned}
        \lambda\scal{p' - v}{x} \leq&\; K + \frac{1}{2} \|v - p'\|^2+ \scal{p'}{v - p'}\\
        =&\; K +\frac{1}{2} \|v - p'\|^2+ (\scal{v}{p'} - \scal{p'}{p'})\\
        \leq&\; K + \frac{1}{2} \|v - p'\|^2+\|p'\|(\|v\| - \|p'\|)\\
        \leq&\; K + \frac{1}{2} \|v - p'\|^2+\|p'\| \|v - p'\|.
    \end{aligned}
    \]

    Therefore, 
    \[\lambda\scal{p' - v}{x} \leq K + \frac{1}{2}\delta_C^2 +  \mu_C\delta_C.\]

    Hence,
    \[0 \leq \scal{p'}{x} - \scal{v}{x} \leq \frac{K + \frac{1}{2} \delta_C^2 + \mu_C \delta_C}{\lambda},\]
    where the first inequality is from the fact that $v \in \lmo_C(x)$.
    \qed
\end{proof}

\bibliographystyle{siam}
\bibliography{frank_wolfe_refs}
\end{document}